\numberwithin{equation}{section}
\newtheorem{thm}{Theorem}
\numberwithin{thm}{section}
\newtheorem{prop}[thm]{Proposition}
\newtheorem{lemma}[thm]{Lemma}
\newtheorem{cor}[thm]{Corollary}
\newtheorem{rmk}[thm]{Remark}
\date{}
\author{Hwangrae Lee}
\title{Power Sum Decompositions of \hbox{Elementary Symmetric Polynomials}}
\begin{document}

\maketitle

\begin{abstract}
We bound the tensor ranks of elementary symmetric polynomials,
and we give explicit decompositions into powers of linear forms.
The bound is attained when the degree is odd.

\end{abstract}

\section{Introduction} 
Given a form $F \in \mathbb{C}[x_1,\dots,x_n]$ of degree $d$, the {\em symmetric tensor rank} (rank in short) of $F$ is the least integer $s$ such that $f = \sum_{i=1}^s L_i^d$, where the $L_i$'s are linear forms.
For a generic form, the rank is known for any $n$ and $d$ by a work of Alexander and Hirschowitz \cite{AH}, and a simple proof is proposed by Chandler \cite{Chandler}.

On the contrary, only a few cases is known for that of specific forms \cite{CCG, CCCGW, LT, TW}.
To describe one of them, we define an index-membership function $\delta$; 
for an integer set $I$ and an integer $i$, define $\delta(I,i) = -1 $ if $i \in I$, or $1$ otherwise.
A decomposition of the monic square-free monomial $\sigma_{n,n}$ in $n$ variables was given by Fischer \cite{Fischer}. He showed that 
\begin{equation}\label{monomial}
2^{n-1}n!\cdot\sigma_{n,n} = \sum_{I \subset [n]\setminus \{1\}} (-1)^{|I|}
(x_1 + \delta(I,2) x_2 + \dots +\delta(I,n) x_n)^n,
\end{equation}
where $[n]$ denotes the set $\{1,2,\dots,n\}$, which will be used though this paper.
For a general monomial, such a decomposition is given in \cite[Corollary 3.8]{CCG} so that \eqref{monomial} is a special case.

We extend the decomposition \eqref{monomial} for general elementary symmetric polynomials $\sigma_{d,n}$. As an example, we have
$$
24\sigma_{3,5}(a,b,c,d,e) = abc+abd+abe+acd+ace+ade+bcd+bce+bde+cde
$$
$$
=  3(a+b+c+d+e)^3-(-a+b+c+d+e)^3-(a-b+c+d+e)^3
$$
$$
-(a+b-c+d+e)^3-(a+b+c-d+e)^3-(a+b+c+d-e)^3,
$$
and it gives an upper bound $\text{rank}(\sigma_{3,5}) \leq 6$.
\newpage
In general, such a decomposition provides an upper bound
$$\text{rank}(\sigma_{d,n}) \leq \sum_{i=0}^{\lfloor d/2\rfloor} \binom{n}{i}$$
for the rank of $\sigma_{d,n}$ (Corollary \ref{bound1}, Corollary \ref{bound2}).

This paper is organized as follows. In section 2 we present a power sum decomposition of $\sigma_{d,n}$ for odd degree case. Theorem \ref{decomp} gives an analogue of \eqref{monomial}.

In Section 3 we observe a structure of the catalecticants of $\sigma_{d,n}$. Lemma \ref{fullrank} says that each catalecticant matrix is essentially full rank, in the sence that it can be refined to a full rank matrix after removing all zero rows and zero columns. Theorem \ref{main} tell us that the lower bound derived by Lemma \ref{fullrank} matches the number of components in the decomposition given in Section 2, hence we get the rank of $\sigma_{d,n}$ for odd $d$.

Section 4 discusses the even degree case. A power sum decomposition of $\sigma_{d,n}$ can be obtained from that of $\sigma_{d+1,n}$ (Theorem \ref{decomp:odd}), or it can be derived directly (Remark \ref{direct}). By Corollary \ref{bound2} we see that upper and lower bounds are not the same, and we shortly explain why it seems to be hard to improve these bounds.

\section{An upper bound}
Through this paper, our main object is the elementary symmetric polynomial $\sigma_{d,n}$.
It is defined by the sum of all square-free monomials. In symbols,
$$\sigma_{d,n} = \sum_{I \subset [n], |I| = d} \prod_{i \in I} x_i.$$

In this section we only focus on the odd degree case. Let $d=2k+1$ be odd, and consider the monomial case $n=d$. The first step is to write \eqref{monomial} into a symmetric way. Let $L_I = \delta(I,1)x_1 + \delta(I,2)x_2+\cdots+\delta(I,n)x_n$ for some $I \subset [n]$, recalling that $\delta(I,i) = \pm 1$ and takes $-1$ if and only if $i \in I$. When $|I| \geq k+1$, we use $-(-L_I)^n = -(L_{I^c})^n$ instead of $(L_I)^n$ in the expression \eqref{monomial}. Now all the linear terms appearing in \eqref{monomial} have less than half as many possible minus signs, and the coefficient of $x_1$ can be $-1$. After consideration of the power of $-1$, we get
\begin{equation}\label{sym}
2^{n-1}n! \cdot \sigma_{n,n}  
= \sum_{I \subset [n], |I| \leq k} (-1)^{|I|}
(\delta(I,1) x_1 + \delta(I,2) x_2 + \dots +\delta(I,n) x_n)^n.
\end{equation}

Using this symmetric representation as the initial step, we can prove following theorem using induction.

\newpage

\begin{thm}\label{decomp}
Let $d=2k+1$ be odd, and $n \geq d$. Then the elementary symmetric polynomial $\sigma_{d,n}$ admits the power sum decomposition
$$
2^{d-1}d! \cdot \sigma_{d,n} = 
$$
\begin{equation}\label{psdec}
\sum_{I \subset [n], |I| \leq k} (-1)^{|I|} \binom{n-k-|I|-1}{k-|I|} (\delta(I,1) x_1 + \delta(I,2) x_2 + \dots +\delta(I,n) x_n)^d
\end{equation}
where $\delta(I,i) = -1 $ if $i \in I$, or $1$ otherwise.
\end{thm}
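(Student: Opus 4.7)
The plan is to induct on $n$, taking the monomial case $n = d = 2k+1$ as the base. When $n = d$, the binomial coefficient $\binom{n-k-|I|-1}{k-|I|} = \binom{k-|I|}{k-|I|} = 1$ for every admissible $I$, so (2.3) collapses to the symmetric form (2.2), which has already been derived.

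For the inductive step I would denote the right-hand side of (2.3) by $F_n$ and prove the recurrence $F_{n+1}|_{x_{n+1}=0} = F_n$. Fix $J \subset [n]$ with $|J| \leq k$; setting $x_{n+1} = 0$ in $L_I$ produces the linear form $L_J$ in $x_1,\ldots,x_n$ for exactly two choices $I = J$ and $I = J\cup\{n+1\}$ (the second only when $|J| \leq k-1$). Their combined coefficient equals $(-1)^{|J|}\bigl[\binom{n-k-|J|}{k-|J|} - \binom{n-k-|J|-1}{k-|J|-1}\bigr]$, which by Pascal's identity collapses to $(-1)^{|J|}\binom{n-k-|J|-1}{k-|J|}$---exactly the coefficient of $L_J^d$ in $F_n$. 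The edge case $|J|=k$ requires a separate check, but there only $I=J$ contributes and both the $(n+1)$-side and the $n$-side binomial coefficients degenerate to $1$.

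Since $\sigma_{d,n+1}|_{x_{n+1}=0} = \sigma_{d,n}$ trivially, the inductive hypothesis yields $(F_{n+1} - 2^{d-1}d!\,\sigma_{d,n+1})|_{x_{n+1}=0} = 0$. Now $F_{n+1}$ is invariant under permutations of $[n+1]$---the sum over all subsets $I\subset[n+1]$ of bounded size respects this action and the coefficient depends only on $|I|$---so the same difference vanishes on every coordinate hyperplane, and is therefore divisible by $x_1 x_2 \cdots x_{n+1}$. As the difference has degree at most $d < n+1$, it must be the zero polynomial, completing the induction.

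I expect the Pascal collapse above, together with the boundary check at $|J| = k$, to be the only substantive step; once it is in place the rest of the argument is formal, since $F_n$ is pinned down uniquely by its hyperplane restrictions, its $S_n$-symmetry, and its degree bound.
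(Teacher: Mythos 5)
Your proposal is correct and follows essentially the same route as the paper: the base case is the symmetrized Fischer identity \eqref{sym}, and the key step is the same Pascal-identity collapse showing that setting the last variable to zero turns the $(n+1)$-variable expression into the $n$-variable one. The paper finishes by restricting all the way down to $F_{d,d}$ and invoking symmetry to see that only square-free monomials survive, while you finish by noting the symmetric difference vanishes on every coordinate hyperplane and hence is divisible by $x_1\cdots x_{n+1}$, which exceeds the degree; these are minor variants of the same concluding argument.
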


\begin{proof}
Write $F_{d,n}$ for the expression \eqref{psdec}. Putting $x_n=0$, we get 
\begin{small}
\begin{align*}
&F_{d,n}(x_1,\dots,x_{n-1},0)\\
=&\sum_{I \subset [n-1], |I| \leq k} (-1)^{|I|} \left(\binom{n-k-|I|-1}{k-|I|} - \binom{n-k-|I|-2}{k-|I|-1} \right) (\delta(I,1) x_1 + \dots +\delta(I,n-1) x_{n-1})^d\\
=&\sum_{I \subset [n-1], |I| \leq k} (-1)^{|I|} \binom{(n-1)-k-|I|-1}{k-|I|} (\delta(I,1) x_1 + \dots +\delta(I,n-1) x_{n-1})^d
\,=\, F_{d,n-1} 
\end{align*}
\end{small}
Recursively we have $F_{d,n}(x_1,\dots,x_d,0,\dots,0) = F_{d,d}$. By \eqref{sym} we obtain
$$
F_{d,d}=\sum_{I \subset [d], |I| \leq k} (-1)^{|I|} \binom{k-|I|}{k-|I|} (\delta(I,1) x_1 + \delta(I,2) x_2 + \dots +\delta(I,d) x_d)^d = 2^{d-1}d!\cdot \sigma_{d,d}.
$$
It shows that $F_{d,n}$ consists of square-free monomials only. Using the symmetry of $F_{d,n}$ we conclude $$F_{d,n} = 2^{d-1}d!\cdot\sigma_{d,n}.$$
\end{proof}

Counting the number of summands, we get an upper bound for the rank of $\sigma_{d,n}$.

\begin{cor}\label{bound1}
For $d$ odd, the rank of $\sigma_{d,n}$ is bounded by
$$\text{\em rank}(\sigma_{d,n}) \leq \sum_{i=0}^{(d-1)/2} \binom{n}{i}.$$
\end{cor}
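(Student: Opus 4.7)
The approach is to read the bound off directly from Theorem \ref{decomp}, which I am allowed to assume. First I would note that tensor rank is invariant under multiplication by a nonzero scalar, so bounding the rank of $2^{d-1}d!\cdot\sigma_{d,n}$ is the same as bounding the rank of $\sigma_{d,n}$. Next I would observe that the decomposition \eqref{psdec} writes $2^{d-1}d!\cdot\sigma_{d,n}$ as a linear combination of $d$-th powers of linear forms, with exactly one summand for each subset $I \subset [n]$ satisfying $|I| \leq k$, where $k=(d-1)/2$. Since the number of such subsets is
\[
\sum_{i=0}^{k}\binom{n}{i} \;=\; \sum_{i=0}^{(d-1)/2}\binom{n}{i},
\]
the rank of $\sigma_{d,n}$ is at most this quantity, which is the desired bound.

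I do not anticipate any real obstacle, only a small bookkeeping check: an upper bound on the rank remains valid even if some scalar coefficients $\binom{n-k-|I|-1}{k-|I|}$ in front of the linear forms happen to vanish, since each vanishing term merely decreases the effective summand count. In fact, under the standing assumption $n \geq d = 2k+1$ one has $n-k-|I|-1 \geq k-|I|$ for every $|I| \leq k$, so all these binomial coefficients are strictly positive and every indexed subset genuinely contributes a summand. That completes the plan; the matching lower bound is deferred to the catalecticant analysis announced for Section 3.
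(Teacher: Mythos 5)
Your proposal is correct and matches the paper's proof, which likewise obtains the bound by simply counting the summands in Theorem \ref{decomp} (one for each $I \subset [n]$ with $|I| \leq (d-1)/2$), absorbing the nonzero scalars into the linear forms. The extra check that the coefficients $\binom{n-k-|I|-1}{k-|I|}$ are positive when $n \geq d$ is a harmless refinement, since an upper bound would survive vanishing terms anyway.
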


Another consequence is a summation identity, as done in \cite{Fischer}. 
For $n \geq 2k+1$ the identity
\begin{equation}\label{comb}
\sum_{i=0}^k (-1)^i \binom{n-k-1-i}{k-i}\binom{n}{i}(n-2i)^{2k+1} = \frac{2^{2k}n!}{(n-2k-1)!}
\end{equation}
can be obtained by choosing $d=2k+1$ and all $x_i=1$ in the equation in Theorem \ref{psdec}.

\section{A lower bound}
Let $S = \mathbb{C}[\frac{\partial}{\partial x_1},\dots,\frac{\partial}{\partial x_n}]$ be the ring of differential operations with constant coefficients.
It naturally acts on \hbox{$R = \mathbb{C}[x_1,\dots,x_n]$} by differentiation. For a form $F$ in $R$ its apolar ideal $F^{\bot}$ is defined by the annihilator of $F$ in $S$. 

\begin{thm}[Apolarity Lemma]
For a degree $d$ form $F \in R_d$ there is a power sum decomposition
$$
F = \sum_{i=1}^s L_i^d, \qquad L_i \text{ linear}
$$
if and only if there exists a set of $s$ distinct points in $\mathbb{P}(S_1)$ whose defining ideal is contained in $F^{\bot}$.
\end{thm}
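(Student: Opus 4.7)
The plan is to work through the apolarity pairing
$$\langle \partial, G \rangle \;:=\; \partial \cdot G \;\in\; \mathbb{C}, \qquad \partial \in S_d,\ G \in R_d,$$
which a routine monomial computation shows is a perfect pairing $S_d \times R_d \to \mathbb{C}$. The single formula to have on hand is the differentiation identity
$$\partial \cdot L^k \;=\; \frac{k!}{(k-j)!}\,\partial(a_1,\dots,a_n)\,L^{k-j} \qquad \bigl(\partial \in S_j,\ L = \textstyle\sum_i a_i x_i,\ j \le k\bigr),$$
where $\partial(a)$ denotes the scalar obtained by substituting $a_i$ for $\partial/\partial x_i$ in $\partial$; specializing $j=k=d$ gives $\langle\partial,L^d\rangle = d!\,\partial(a)$.

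For the forward implication I would start from $F = \sum_{i=1}^s L_i^d$ with $L_i = \sum_k a_{ik} x_k$ and associate to each $L_i$ the point $p_i := [a_{i1}:\cdots:a_{in}] \in \mathbb{P}(S_1)$. After combining proportional $L_i$'s and then padding back up with generic additional points (padding only shrinks the defining ideal, so preserves any containment $I \subset F^{\bot}$), I may assume the $p_i$ are $s$ distinct points with defining ideal $I$. For any homogeneous $\partial \in I$ and any $i$, the displayed formula yields $\partial \cdot L_i^d = 0$, because $\partial(a^{(i)})=0$; summing gives $\partial \cdot F = 0$, hence $I \subset F^{\bot}$.

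For the reverse implication I would restrict the containment to degree $d$ and apply duality. The inclusion $I_d \subset F_d^{\bot}$ inside $S_d$, passed through the perfect pairing, becomes the reversed inclusion $(F_d^{\bot})^\bot \subset (I_d)^\bot$ inside $R_d$. The left-hand side is the line $\mathbb{C}F$. For the right-hand side, the identity $\langle \partial, L_{p_i}^d\rangle = d!\,\partial(a^{(i)})$ shows that ``$\partial$ annihilates every $L_{p_i}^d$'' and ``$\partial \in I_d$'' are the same condition, so $(I_d)^\bot = \mathrm{span}\{L_{p_i}^d\}$. Thus $F = \sum_i \lambda_i L_{p_i}^d$ for some scalars $\lambda_i$, and absorbing $\lambda_i^{1/d}$ into $L_{p_i}$ (and discarding any term with $\lambda_i = 0$) produces a power sum decomposition using at most $s$ linear forms, which is what the rank bound requires.

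I expect no step to be genuinely hard: once the perfectness of the apolarity pairing and the displayed differentiation formula are in hand (both straightforward monomial calculations), the remainder is linear-algebra bookkeeping with annihilators under a perfect pairing. The one mildly delicate move is the padding trick in the forward direction, used to produce exactly $s$ \emph{distinct} points from a decomposition that a priori only gives $s$ (possibly repeated) linear forms.
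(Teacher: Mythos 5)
The paper states the Apolarity Lemma as a classical result and supplies no proof of its own, so there is nothing to compare against; judged on its own terms, your argument is correct and is the standard one (as in Iarrobino--Kanev or Ranestad--Schreyer). Both directions are sound: in the forward direction the differentiation identity reduces everything to $\partial(a^{(i)})=0$, and your padding step is legitimate because enlarging the point set only shrinks its defining ideal; in the reverse direction the identification $(I_d)^{\perp}=\mathrm{span}\{L_{p_i}^d\}$ follows from the double-annihilator property of the perfect pairing exactly as you say. Two trivia: you do not actually need to compute $\bigl((F^{\perp})_d\bigr)^{\perp}=\mathbb{C}F$, since $F\in\bigl((F^{\perp})_d\bigr)^{\perp}$ holds tautologically and that is all the argument uses (this also sidesteps the implicit assumption $F\neq 0$); and your closing remark that the conclusion is ``at most $s$'' summands is the right reading of the statement, since that is what the rank bound requires.
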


The apolarity lemma plays a key role in computations of symmetric tensor rank, especially for lower bounds. Many (possibly all) known lower bounds, given in \cite{CCCGW, LT, RS} for instance, are related to the apolar ideal or at least catalecticants.

The $r$-th catalecticant of a given form $F \in R_d$ is a linear map $\phi_r : S_r \rightarrow R_{d-r}$ given by $\phi(g) = gF$. Using monomial basis, $\phi_r$ can be written as an $\binom{n+d-r-1}{d-r} \times \binom{n+d-1}{d}$ matrix $M_r$. Studying catalecticants and apolar ideals are essentially same, by the relation $F^{\bot} = \bigcup \ker \phi_r$ or equivalently $\ker \phi_r = (F^{\bot})_r$. Note that it implies $\text{Hilb}(S/F^{\bot}, r)=\text{rank}(M_r)$

For our case $F=\sigma_{d,n}$, the matrix $M_r$ has many zero columns and zero rows. A column or a row of $M_r$ is nonzero if and only if its index is square-free. Removing all zero columns and zero rows, we get a $\binom{n}{d-r} \times \binom{n}{r}$ submatrix $\widetilde{M_r}$ of $M_r$, whose indices are square-free monomials of corresponding degrees. Collecting subindices of monomials, we regards the indices of rows and columns as a subset of $[n]$. The matrix $\widetilde{M_r}$ is binary with $(\widetilde{M_r})_{I,J} = 1$ if and only if $I$ and $J$ are disjoint.

For instance, the second catalecticant of $\sigma_{4,5}$ is given by
\begin{small}
$$M_2=
\bordermatrix{
  &\bf11 & 12 & 13 & 14 & 15 & \bf22 & 23 & 24 & 25 & \bf33 & 34 & 35 & \bf44 & 45 & \bf55\cr
\bf11 & \bf0&\bf0&\bf0&\bf0&\bf0&\bf0&\bf0&\bf0&\bf0&\bf0&\bf0&\bf0&\bf0&\bf0&\bf0\cr
12 & \bf0&0&0&0&0&\bf0&0&0&0&\bf0&1&1&\bf0&1&\bf0\cr
13 & \bf0&0&0&0&0&\bf0&0&1&1&\bf0&0&0&\bf0&1&\bf0\cr
14 & \bf0&0&0&0&0&\bf0&1&0&1&\bf0&0&1&\bf0&0&\bf0\cr
15 & \bf0&0&0&0&0&\bf0&1&1&0&\bf0&1&0&\bf0&0&\bf0\cr
\bf22 & \bf0&\bf0&\bf0&\bf0&\bf0&\bf0&\bf0&\bf0&\bf0&\bf0&\bf0&\bf0&\bf0&\bf0&\bf0\cr
23 & \bf0&0&0&1&1&\bf0&0&0&0&\bf0&0&0&\bf0&1&\bf0\cr
24 & \bf0&0&1&0&1&\bf0&0&0&0&\bf0&0&1&\bf0&0&\bf0\cr
25 & \bf0&0&1&1&0&\bf0&0&0&0&\bf0&1&0&\bf0&0&\bf0\cr
\bf33 & \bf0&\bf0&\bf0&\bf0&\bf0&\bf0&\bf0&\bf0&\bf0&\bf0&\bf0&\bf0&\bf0&\bf0&\bf0\cr
34 & \bf0&1&0&0&1&\bf0&0&0&1&\bf0&0&0&\bf0&0&\bf0\cr
35 & \bf0&1&0&1&0&\bf0&0&1&0&\bf0&0&0&\bf0&0&\bf0\cr
\bf44 & \bf0&\bf0&\bf0&\bf0&\bf0&\bf0&\bf0&\bf0&\bf0&\bf0&\bf0&\bf0&\bf0&\bf0&\bf0\cr
45 & \bf0&1&1&0&0&\bf0&1&0&0&\bf0&0&0&\bf0&0&\bf0\cr
\bf55 & \bf0&\bf0&\bf0&\bf0&\bf0&\bf0&\bf0&\bf0&\bf0&\bf0&\bf0&\bf0&\bf0&\bf0&\bf0\cr}.
$$
\end{small}
\newpage
\noindent Its rows and columns are indexed by $2$-subsets of $\{1,\dots,5\}$, and a row (resp. a column) indexed by $ij$ corresponds to the monomial $x_i x_j$ (resp. $\frac{\partial^2}{\partial x_i x_j}$). Removing zero rows and zero columns indexed by $\bf11, 22, \dots, 55$, we obtain
$$\widetilde{M_2}=
\bordermatrix{
  & 12 & 13 & 14 & 15 & 23 & 24 & 25 & 34 & 35 & 45\cr
12 & 0&0&0&0&0&0&0&1&1&1\cr
13 & 0&0&0&0&0&1&1&0&0&1\cr
14 & 0&0&0&0&1&0&1&0&1&0\cr
15 & 0&0&0&0&1&1&0&1&0&0\cr
23 & 0&0&1&1&0&0&0&0&0&1\cr
24 & 0&1&0&1&0&0&0&0&1&0\cr
25 & 0&1&1&0&0&0&0&1&0&0\cr
34 & 1&0&0&1&0&0&1&0&0&0\cr
35 & 1&0&1&0&0&1&0&0&0&0\cr
45 & 1&1&0&0&1&0&0&0&0&0\cr}
$$
and one may check that is has full rank. Next lemma shows that it is true in general, including the case that the resulting $\widetilde{M_r}$ is not a square matrix.

\begin{lemma}\label{fullrank}
For all $r, d,$ and $n$ with $r \leq d \leq n$, the matrix $\widetilde{M_r}$ is of full rank.
\end{lemma}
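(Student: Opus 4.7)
My plan is to reduce the statement to Gottlieb's classical full-rank theorem for subset inclusion matrices. Since $\widetilde{M_r}$ and $\widetilde{M_{d-r}}$ are transposes of one another, I may assume $r \le d-r$, in which case $\widetilde{M_r}$ has the smaller number $\binom{n}{r}$ of columns, and the task reduces to showing these columns are linearly independent.

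The key step is a complementation of row indices: replacing each row label $I$ by $K = [n]\setminus I$ (a set of size $n-d+r$) turns the disjointness condition $I \cap J = \emptyset$ into $J \subseteq K$. After this row permutation, $\widetilde{M_r}$ becomes the inclusion matrix of $r$-subsets inside $(n-d+r)$-subsets of $[n]$. The hypotheses $r \le d \le n$ together with $r \le d-r$ give $r \le n-d+r$ and $r + (n-d+r) \le n$, which is exactly the range where Gottlieb's theorem asserts full column rank $\binom{n}{r}$.

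I expect the main obstacle, should one wish to avoid citing Gottlieb, to be the boundary case $2r = d$. A self-contained alternative is induction on $n$ using the Pascal-type block decomposition
$$\widetilde{M_r}^{(n)} = \begin{pmatrix} 0 & B \\ C & D \end{pmatrix},$$
whose blocks $B$, $C$, $D$ are catalecticants of $\sigma_{d-1,n-1}$, $\sigma_{d-1,n-1}$, and $\sigma_{d,n-1}$ respectively, obtained by splitting rows and columns according to whether they contain the index $n$. The base case $n=d$ reduces to the complementation permutation matrix. When $2r < d$, the induction hypothesis gives both $B$ and $C$ full column rank, and a short block-column argument closes the proof. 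In the balanced case $2r = d$, however, $B$ falls short of full column rank; here the relevant square matrix is the adjacency matrix of the Kneser graph $K(n,r)$, and one would invoke the eigenvalues $(-1)^i\binom{n-r-i}{r-i}$, which are nonzero precisely when $n \ge 2r$.
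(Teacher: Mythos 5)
Your proof is correct, and it takes a somewhat different route from the paper's, though the two rest on essentially equivalent combinatorial facts. The paper also reduces to $r \le d-r$, but then works with the square $\binom{n}{r}\times\binom{n}{r}$ disjointness matrix $D_r^n$ (invertible for $2r\le n$, cited from Kushilevitz--Nisan): it shows the row space of $\widetilde{M_r}$ contains that of $D_r^n$ by forming, for each $r$-subset $I$, the sum $\sum_{J\supset I}(\widetilde{M_r})_{J,*}$ over $(d-r)$-subsets $J$, which by symmetry equals $\binom{n-2r}{d-2r}$ times the corresponding row of $D_r^n$, a nonzero multiple since $d\le n$. Your complementation $I\mapsto [n]\setminus I$ instead identifies $\widetilde{M_r}$, after a mere row permutation, with the inclusion matrix $W_{r,\,n-d+r}$, and Gottlieb's theorem finishes it; this avoids the auxiliary row-combination step entirely and handles the rectangular case in one stroke, at the cost of citing a slightly heavier classical result. (Note that $D_r^n$ is itself $W_{r,n-r}$ up to complementation of one index set, so the two cited facts are two faces of the same theorem.) Your inequality checks are right: $r\le n-d+r$ follows from $d\le n$, and $2r\le d$ gives $n-d+r\le n-r$, so the minimum in Gottlieb's rank formula is $\binom{n}{r}$ as needed. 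The secondary inductive sketch you offer is not needed and, as you note, is incomplete in the balanced case $2r=d$ without importing the Kneser eigenvalue computation $(-1)^i\binom{n-r-i}{r-i}$ --- which is precisely the fact the paper's reference supplies --- so the Gottlieb argument should be regarded as your actual proof.
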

\begin{proof}
Since $\widetilde{M_r} = (\widetilde{M}_{d-r})^{\rm T}$, we may assume $r \leq d-r$. Note that it forces $\binom{n}{d-r} \geq \binom{n}{r}$ and $2r \leq n$. Our goal is to show ${\rm rank}(\widetilde{M_r}) = \binom{n}{r}$.

Define an $\binom{n}{r} \times \binom{n}{r}$ binary matrix $D_r^n$ whose columns and rows are indexed by $r$-subsets of $[n]$, and $(D_r^n)_{I,J} = 1$ if and only if $I$ and $J$ are disjoint. In \cite[Example 2.12]{KN}, the matrix $D_r^n$ is shown to be invertible when $2r \leq n$. 

We claim that the row space of $D_r^n$ is a subspace of the row space of $\widetilde{M_r}$. Pick a row vector $v_I$ of $D_r^n$ indexed by an $r$-subset $I$. Consider a row vector
$$
w_I = \sum_{J \supset I} (\widetilde{M_r})_{J,*}
$$
where each summand is the row of $\widetilde{M_r}$ indexed by $J$. By the symmetry of indices, $w_I = c\cdot v_I$ for some integer $c$. Since $\widetilde{M_r}$ has no zero row, $c \neq 0$ and we deduce the claim.
\end{proof}

Since zero rows and zero columns do not contribute to the matrix rank, the following corollary is an immediate consequence.

\begin{cor}\label{hilb}
The Hilbert function of $S/(\sigma_{d,n})^{\bot}$ is given by
$$
\text{\em Hilb}(S/(\sigma_{d,n})^{\bot},r)=
\left\{
\begin{array}{cl}
\binom{n}{r} & \text{ if } r \leq \lfloor d/2 \rfloor \\
\binom{n}{d-r} & \text{ if } r > \lfloor d/2 \rfloor 
\end{array}
\right.
.$$
\end{cor}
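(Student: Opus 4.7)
The strategy is to combine the relation $\mathrm{Hilb}(S/F^{\bot},r)=\mathrm{rank}(M_r)$ stated earlier with Lemma~\ref{fullrank}. My first step is to note that discarding zero rows and zero columns of a matrix does not change its rank; hence $\mathrm{rank}(M_r)=\mathrm{rank}(\widetilde{M_r})$ for $F=\sigma_{d,n}$. By Lemma~\ref{fullrank}, the $\binom{n}{d-r}\times\binom{n}{r}$ matrix $\widetilde{M_r}$ has full rank, so $\mathrm{rank}(\widetilde{M_r})=\min\!\left(\binom{n}{d-r},\binom{n}{r}\right)$. The only thing left is to identify which of these two binomials is the smaller one under the hypothesis $d\le n$.

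For this I would use unimodality of $k\mapsto\binom{n}{k}$, which is increasing for $k\le n/2$ and decreasing for $k\ge n/2$. Equivalently, $\binom{n}{a}\le\binom{n}{b}$ iff $a$ is at least as far from $n/2$ as $b$. Setting $a=r$ and $b=d-r$ with $r\le\lfloor d/2\rfloor$, one checks both sub-cases: if $d-r\le n/2$ then the distances compare as $n/2-r\ge n/2-(d-r)$, which reduces to $d\ge 2r$; if $d-r>n/2$ then the comparison becomes $n/2-r\ge (d-r)-n/2$, which reduces to $n\ge d$. Both inequalities hold by hypothesis, so $\binom{n}{r}\le\binom{n}{d-r}$ and therefore $\mathrm{rank}(\widetilde{M_r})=\binom{n}{r}$ for $r\le\lfloor d/2\rfloor$.

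For $r>\lfloor d/2\rfloor$ the roles of $r$ and $d-r$ are swapped, so the same argument (applied to $d-r\le\lfloor d/2\rfloor$) yields $\mathrm{rank}(\widetilde{M_r})=\binom{n}{d-r}$. Combining both cases with $\mathrm{Hilb}(S/(\sigma_{d,n})^{\bot},r)=\mathrm{rank}(\widetilde{M_r})$ gives exactly the piecewise formula in the statement.

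There is essentially no obstacle here: the content is entirely in Lemma~\ref{fullrank}, and the corollary is a routine bookkeeping step. The only subtle point is making sure the case analysis for comparing $\binom{n}{r}$ with $\binom{n}{d-r}$ covers the regime $d-r>n/2$, which does occur when $d$ is close to $n$; this is handled by the second sub-case above and relies crucially on $d\le n$.
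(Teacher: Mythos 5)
Your argument is correct and follows the same route as the paper: the paper treats the corollary as an immediate consequence of Lemma~\ref{fullrank} together with the fact that deleting zero rows and columns preserves rank, having already noted inside the lemma's proof that $r\le d-r$ forces $\binom{n}{d-r}\ge\binom{n}{r}$. Your unimodality case analysis just makes that last comparison explicit, and it is accurate.
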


It gives a lower bound $\text{rank}(\sigma_{d,n}) \geq \max \{\text{Hilb}(S/(\sigma_{d,n})^{\bot},r)\} = \binom{n}{\lfloor d/2 \rfloor}$ by apolarity lemma, but we can go further.
For any nonzero linear form $L \in S$, the number $\dim_{\mathbb{C}} S/((F^{\bot} : L)+(L))$ gives a lower bound for the rank of $F$ \cite[Theorem 3.3]{CCCGW}.

Take $L = \frac{\partial}{\partial x_n}$. Then it is easy to see that
$$
((\sigma_{d,n})^{\bot}:L)+L = S\cdot (\sigma_{d-1,n-1})^{\bot}+L.
$$
Here, $(\sigma_{d-1,n-1})^{\bot}$ can be computed in either $S$ or $S'=\mathbb{C}[\frac{\partial}{\partial x_1},\dots, \frac{\partial}{\partial x_{n-1}}] \subset S$. Since $S/L = S'$, we conclude 
$$\text{rank}(\sigma_{d,n}) \geq \dim_{\mathbb{C}}S'/(\sigma_{d-1,n-1})^{\bot}.$$

\begin{thm}\label{main}
For $d$ odd, we have
$$
\text{\em rank}(\sigma_{d,n}) = \sum_{r=0}^{(d-1)/2}\binom{n}{r}.
$$
\end{thm}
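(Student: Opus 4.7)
The plan is to match the upper bound from Corollary \ref{bound1} with a lower bound derived from the apolarity machinery just developed. The preceding discussion (invoking \cite[Theorem 3.3]{CCCGW} with $L=\partial/\partial x_n$) already establishes
$$
\text{rank}(\sigma_{d,n}) \;\geq\; \dim_{\mathbb{C}} S'/(\sigma_{d-1,n-1})^{\bot},
$$
so the task reduces to computing this dimension. Since $d$ is odd, $d-1=2k$ is even, and the apolar algebra is Artinian, so its $\mathbb{C}$-dimension is the sum of its Hilbert values over $r=0,1,\dots,d-1$.

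Next I would invoke Corollary \ref{hilb} for $\sigma_{d-1,n-1}$ (with $n-1$ variables and socle degree $2k$). Splitting the sum at $r=k$ and reindexing the tail by $s=2k-r$ yields
$$
\dim_{\mathbb{C}} S'/(\sigma_{d-1,n-1})^{\bot} = \sum_{r=0}^{k} \binom{n-1}{r} + \sum_{r=k+1}^{2k} \binom{n-1}{2k-r} = \sum_{r=0}^{k}\binom{n-1}{r} + \sum_{s=0}^{k-1}\binom{n-1}{s}.
$$
A single application of Pascal's rule $\binom{n}{r}=\binom{n-1}{r}+\binom{n-1}{r-1}$ then collapses this to $\sum_{r=0}^{k}\binom{n}{r} = \sum_{r=0}^{(d-1)/2}\binom{n}{r}$, matching the upper bound from Corollary \ref{bound1} and giving the desired equality.

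I do not expect any substantive obstacle here: Theorem \ref{decomp} supplies the upper bound, Lemma \ref{fullrank} (via Corollary \ref{hilb}) pins down every Hilbert value, and the CCCGW derivative trick converts that Hilbert count into the required rank lower bound. The only ingredient that merits care is the ideal-theoretic identity $((\sigma_{d,n})^{\bot}:L)+L = S\cdot(\sigma_{d-1,n-1})^{\bot}+L$ underlying the reduction, but this follows from $\partial\sigma_{d,n}/\partial x_n = \sigma_{d-1,n-1}$ (in the remaining variables) together with the standard behavior of colon and sum ideals modulo a linear form.
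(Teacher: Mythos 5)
Your proposal is correct and follows exactly the paper's own route: the upper bound from Corollary \ref{bound1}, the lower bound $\text{rank}(\sigma_{d,n}) \geq \dim_{\mathbb{C}} S'/(\sigma_{d-1,n-1})^{\bot}$ via the derivative $L=\partial/\partial x_n$, and the Hilbert function computation from Corollary \ref{hilb} collapsed by Pascal's rule to $\sum_{r=0}^{k}\binom{n}{r}$. Your reindexed summation is in fact cleaner than the paper's (which contains some index typos), but the argument is the same.
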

\begin{proof}
We have a lower bound
$$\text{rank}(\sigma_{d,n}) \geq \dim_{\mathbb{C}}S'/(\sigma_{d-1,n-1})^{\bot}
= \sum_r \text{Hilb}(S'/(\sigma_{d-1,n-1})^{\bot},r).$$
By Corollary \ref{hilb}, we get
$$
\sum_r \text{Hilb}(S'/(\sigma_{d-1,n-1})^{\bot},r) \,\,=\,\, \sum_{r=0}^k \binom{n-1}{r} + \sum_{r=k+1}^{n-1} \binom{n-1}{d-r}
$$
$$
=\binom{n-1}{0} + \sum_{i=r}^k \left( \binom{n-1}{r} + \binom{n-1}{r-1} \right)
\,\,=\,\,1+\sum_{r=1}^k \binom{n}{r}
$$
and it is coincide to the upper bound given in Corollary \ref{bound1}.
\end{proof}
\begin{rmk}\em
One may ask for a power sum decomposition of a form over real field. The smallest number of required real linear forms is called the {\em real rank} of given form. Since all the coefficients in \eqref{psdec} are real, in fact rational, Theorem \ref{main} also holds for the real rank.
\end{rmk}

\section{Even degree}
For even $n=d=2k$, The equation \eqref{monomial} can be written in symmetric format as
\begin{align*}
2^{n-1}n! \cdot \sigma_{n,n} &= \sum_{I \subset [n], |I| < k} (-1)^{|I|}
(\delta(I,1) x_1 + \delta(I,2) x_2 + \dots +\delta(I,n) x_n)^n \\
&+ \sum_{I \subset [n], |I| = k}  \frac{(-1)^k}{2}
(\delta(I,1) x_1 + \delta(I,2) x_2 + \dots +\delta(I,n) x_n)^n.
\end{align*}
However it is hard to generalize this expression to elementary symmetric polynomials directly, while it is in fact possible (Remark \ref{direct}). Instead, we can easily get a power sum decomposition using Theorem \ref{decomp}.
\begin{thm}\label{decomp:odd}
Let $d=2k$ be even, and $n > d$. Then the elementary symmetric polynomial $\sigma_{d,n}$ admits the power sum decomposition
$$
2^d(n-d)d! \cdot \sigma_{d,n} =
$$
\begin{equation}\label{psdec:odd}
\sum_{I \subset [n], |I| \leq k} (-1)^{|I|} \binom{n-k-|I|-1}{k-|I|} (n-2|I|) (\delta(I,1) x_1 + \delta(I,2) x_2 + \dots +\delta(I,n) x_n)^d
\end{equation}
where $\delta(I,i) = -1 $ if $i \in I$, or $1$ otherwise.
\end{thm}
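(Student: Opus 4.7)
The plan is to derive \eqref{psdec:odd} by applying the first-order differential operator $\Delta = \sum_{j=1}^n \partial/\partial x_j$ to the odd-degree identity of Theorem \ref{decomp}, instantiated at degree $d+1 = 2k+1$, and then cancelling a factor of $d+1$ that will appear on both sides.

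Two elementary calculations supply the required ingredients. First, every square-free monomial of degree $d$ arises as the partial derivative $\partial/\partial x_j$ of exactly $n-d$ square-free monomials of degree $d+1$ (one for each variable not yet appearing), so
$$\Delta\,\sigma_{d+1,n} \;=\; (n-d)\,\sigma_{d,n}.$$
Second, writing $L_I = \delta(I,1)x_1 + \cdots + \delta(I,n)x_n$, one has $\Delta L_I = \sum_j \delta(I,j) = (n-|I|) - |I| = n-2|I|$, whence by the chain rule
$$\Delta\bigl(L_I^{\,d+1}\bigr) \;=\; (d+1)\,(n-2|I|)\,L_I^{\,d}.$$

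Applying $\Delta$ to both sides of Theorem \ref{decomp} at odd degree $d+1$,
$$2^d(d+1)!\cdot\sigma_{d+1,n} \;=\; \sum_{I\subset[n],\,|I|\leq k}(-1)^{|I|}\binom{n-k-|I|-1}{k-|I|}L_I^{\,d+1},$$
and noting that the summation range $|I|\leq k$ is unchanged since $((d+1)-1)/2 = k$, the left-hand side becomes $2^d(d+1)!\,(n-d)\,\sigma_{d,n}$, while each term on the right picks up a factor $(d+1)(n-2|I|)$. Dividing through by $d+1$ and using $(d+1)!/(d+1) = d!$ reproduces \eqref{psdec:odd} verbatim.

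There is essentially no obstacle: the argument is a one-line corollary of Theorem \ref{decomp}. The only hypothesis to double-check is $n>d$, which the statement assumes; this ensures that $n-d>0$ so the identity is nontrivial after differentiation, and it also guarantees that the binomial $\binom{n-2k-1}{0}=1$ appearing at $|I|=k$ is well-defined.
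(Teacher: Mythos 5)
Your proof is correct and is essentially the paper's own argument: the paper likewise obtains the identity by applying $\sum_j \partial/\partial x_j$ to the odd-degree decomposition of $\sigma_{d+1,n}$ from Theorem \ref{decomp}, using $\bigl(\sum_j \partial/\partial x_j\bigr)\sigma_{d+1,n}=(n-d)\sigma_{d,n}$. Your write-up just makes the term-by-term computation $\Delta(L_I^{\,d+1})=(d+1)(n-2|I|)L_I^{\,d}$ and the cancellation of $d+1$ explicit.
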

\begin{proof}
Note that
$$
\left(\frac{\partial}{\partial x_1} + \cdots + \frac{\partial}{\partial x_n}\right) \sigma_{d+1,n}
=(n-d) \sigma_{d,n}.
$$
Applying \eqref{psdec} to the left side, we get the required decomposition.
\end{proof}
\begin{rmk}\label{direct}\em
After knowing the decomposition \eqref{psdec:odd}, Theorem \ref{decomp:odd} can be proven inductively as the proof of Theorem \ref{decomp}.
\end{rmk}
\begin{rmk}\em
If we take $x_i=1$ for all $i$ in \eqref{psdec:odd} then we get \eqref{comb}.
\end{rmk}

Since the number of summands are not equal to the lower bound given by Corollary \ref{hilb} unless $n=d$, we do not know whether the expression \eqref{psdec:odd} is minimal.
\begin{cor}\label{bound2}
For $d$ even, the rank of $\sigma_{d,n}$ is bounded by
$$
\left(\sum_{r=0}^{d/2}\binom{n}{r}\right) - \binom{n-1}{d/2}\leq
\text{\em rank}(\sigma_{d,n}) \leq \sum_{r=0}^{d/2} \binom{n}{r}.$$
\end{cor}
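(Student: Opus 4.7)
My plan is to derive the upper bound by counting summands in the decomposition \eqref{psdec:odd} of Theorem \ref{decomp:odd}, and the lower bound by reusing the $L=\partial/\partial x_n$ trick from the proof of Theorem \ref{main}.

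For the upper bound, the summation in \eqref{psdec:odd} ranges over $I\subset[n]$ with $|I|\leq k:=d/2$, giving $\sum_{r=0}^{d/2}\binom{n}{r}$ summands. I would verify that, under the hypothesis $n>d$, every coefficient $(-1)^{|I|}\binom{n-k-|I|-1}{k-|I|}(n-2|I|)$ is nonzero: the binomial is positive because $n-k-|I|-1\geq k-|I|$ is equivalent to $n\geq 2k+1$, and $n-2|I|>0$ since $|I|\leq k<n/2$. Hence every summand contributes, and the stated upper bound holds.

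For the lower bound I would apply \cite[Theorem 3.3]{CCCGW} with $L=\partial/\partial x_n$, exactly as in the proof of Theorem \ref{main}, obtaining $\text{rank}(\sigma_{d,n})\geq \dim_{\mathbb{C}} S'/(\sigma_{d-1,n-1})^{\bot}$. Since $d-1$ is odd with $\lfloor(d-1)/2\rfloor=k-1$, Corollary \ref{hilb} together with a symmetric re-indexing of the Hilbert function evaluates this dimension as $2\sum_{r=0}^{k-1}\binom{n-1}{r}$. An elementary application of Pascal's rule with telescopic cancellation then yields
$$\sum_{r=0}^{k}\binom{n}{r}-\binom{n-1}{k}=2\sum_{r=0}^{k-1}\binom{n-1}{r},$$
matching the computed lower bound. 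I do not anticipate any serious obstacle: both bounds reduce rapidly to results already at hand. The interesting feature of this corollary is rather the persistent gap $\binom{n-1}{d/2}$ between them, reflecting a genuine limitation of the catalecticant/apolarity approach in even degree.
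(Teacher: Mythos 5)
Your proposal is correct and matches the paper's (implicit) argument: the upper bound is the summand count in Theorem \ref{decomp:odd}, and the lower bound is the inequality $\mathrm{rank}(\sigma_{d,n})\geq\dim_{\mathbb{C}}S'/(\sigma_{d-1,n-1})^{\bot}$ already established via $L=\partial/\partial x_n$ before Theorem \ref{main}, evaluated by Corollary \ref{hilb} as $2\sum_{r=0}^{d/2-1}\binom{n-1}{r}$, which your Pascal-rule identity correctly rewrites in the stated form. The nonvanishing check on the coefficients is not needed for an upper bound but is harmless.
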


We close this section by giving a difficulty of an improvement of Corollary \ref{bound2}. If $d=4$ and $n=5$, for instance, then the bound is $10 \leq \text{rank}(\sigma_{4,5}) \leq 16$. It is easy to see that $\text{rank}(\sigma_{4,5}) \geq 11$ by observing $\text{Hilb}(S/(\sigma_{4,5})^{\bot}, 2)=10$ and $x_1^2, \dots, x_5^2 \in (\sigma_{4,5})^{\bot}$. Both lower bounds given in \cite{LT} and \cite{RS} still give $11$. The following proposition suggests that it will be tough to find a decomposition with less than $15$ pure powers, even if there exists. It is checked by brute force using {\em Macaulay2} \cite{M2}.
\begin{prop}
Let $I$ be the defining ideal of $15$ points among $16$ points of the form $(\pm 1, \pm 1, \pm 1, \pm 1, \pm 1) \in \mathbb{P}^4$. Then $I$ is not contained in $(\sigma_{4,5})^{\bot}$.
\end{prop}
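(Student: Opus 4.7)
The plan is to avoid \emph{Macaulay2} by exhibiting an explicit element of $I$ that does not annihilate $\sigma_{4,5}$.

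Normalize each sign-pattern point so that its last coordinate is $+1$, and write the removed point as $P_0 = (\epsilon_1, \epsilon_2, \epsilon_3, \epsilon_4, 1)$ with $\epsilon_i \in \{\pm 1\}$. The certificate will be the degree-$4$ Lagrange interpolation polynomial
$$
\ell \;=\; \prod_{i=1}^{4} \frac{1}{2}\left(\frac{\partial}{\partial x_5} + \epsilon_i \frac{\partial}{\partial x_i}\right) \;\in\; S_4,
$$
which, viewed as a polynomial function on $\mathbb{P}(S_1)$, takes the value $1$ at $P_0$ and vanishes at each of the other $15$ sign-pattern points. Thus $\ell \in I$ by construction.

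To show $\ell \notin (\sigma_{4,5})^{\bot}$, I would apply Theorem \ref{decomp:odd} at $d=4$, $n=5$ to write
$$
384\,\sigma_{4,5} \;=\; \sum_{P} c_P\, L_P^{4}
$$
where the sum runs over all $16$ sign-pattern points $P$, $L_P$ is the corresponding linear form, and $c_P \in \{5,\, -3,\, 1\}$ according to whether $P$ has $0, 1,$ or $2$ minus signs in its normalized representative. Crucially, all three orbit coefficients are nonzero, since the explicit formula $(-1)^{|I|}(n-2|I|)\binom{n-k-|I|-1}{k-|I|}$ does not vanish for any admissible $|I|$ when $n = 5$. Applying $\ell$ to this identity and using the standard evaluation $g\cdot L^d = d!\,g(L)$ for $g \in S_d$, $L \in R_1$, only the term indexed by $P = P_0$ survives, giving
$$
384\,(\ell \cdot \sigma_{4,5}) \;=\; 4!\, c_{P_0} \;\neq\; 0,
$$
and therefore $\ell \in I \setminus (\sigma_{4,5})^{\bot}$.

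The main subtle point in this plan is the nonvanishing of $c_{P_0}$ in every orbit; without it the Lagrange certificate would fail, and one would have to fall back on an orbit-by-orbit inspection (three cases under coordinate permutation, of sizes $1, 5, 10$). Conceptually, what the argument says is that the Fischer-type decomposition from Theorem \ref{decomp:odd} is rigid on the sign-pattern points---none of its $16$ summands can be absorbed into the others---which is precisely the projective-dual content of the proposition.
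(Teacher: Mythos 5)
Your proof is correct, and it takes a genuinely different route from the paper: the paper establishes this proposition purely by a brute-force \emph{Macaulay2} computation, whereas you produce an explicit, human-checkable certificate. Both ingredients of your certificate check out. The interpolator $\ell=\prod_{i=1}^4\tfrac12\bigl(\frac{\partial}{\partial x_5}+\epsilon_i\frac{\partial}{\partial x_i}\bigr)$ has even degree, so its vanishing at a point of $\mathbb{P}^4$ is independent of the affine representative; it equals $1$ at $P_0$ and $0$ at the other fifteen sign points, hence lies in $I_4$. Specializing \eqref{psdec:odd} to $d=4$, $n=5$ gives exactly sixteen summands, one per projective sign point, with coefficients $5$, $-3$, $1$ for $|I|=0,1,2$, all nonzero; contracting with $\ell$ via $g\cdot L^4=4!\,g(L)$ kills every term except the one at $P_0$ and yields $\ell\cdot\sigma_{4,5}=c_{P_0}/16\neq0$. (One small wording slip: with your normalization the last coordinate is $+1$ and the number of minus signs can be anywhere from $0$ to $4$; the coefficients $5,-3,1$ are attached to the representative with at most two minus signs, i.e., the one actually occurring in \eqref{psdec:odd}. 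This is immaterial, since all you use is $c_{P_0}\neq0$.) Your approach buys several things the computation does not: it is uniform over the sixteen choices of omitted point rather than a case check, it proves the paper's concluding sentence directly (apply $\ell$ to any purported $15$-term combination of fourth powers of sign forms), and it generalizes to other $(d,n)$ whenever all orbit coefficients in \eqref{psdec} or \eqref{psdec:odd} are nonzero. The only thing the \emph{Macaulay2} route offers in exchange is independence from Theorem \ref{decomp:odd}, on whose correctness your argument relies.
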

It means that $\sigma_{4,5}$ can not be written as a linear combination of $15$ (or less) polynomials of the form $(\pm x_1 \pm x_2 \pm x_3 \pm x_4 \pm x_5)^4$.

\medskip \bigskip \bigskip

\noindent
{\bf Acknowledgements.}\smallskip \\
The author thanks to Joseph Landsberg and Bernd Sturmfels for their encouragement on this paper.
He also greatful to Peter B{\"u}rgisser for his useful comment which makes \hbox{Lemma \ref{fullrank}} into a simple form.
This work was initiated by a mentoring program named ``NIMS school on applied algebraic geometry'', held in the spring of 2015 and supported by NIMS and SRC-GAIA, with author's mentor Youngho Woo.

\begin{small}

\end{small}

\bigskip

\noindent
\footnotesize {\bf Author's addresses:}

\noindent Hwangrae Lee, 
Pohang University of Science and Technology, Korea,
 {\tt meso@postech.ac.kr}
 
\end{document}